\newtheorem{theorem}{Theorem}
\newtheorem{corollary}{Corollary}
\newtheorem{lemma}[theorem]{Lemma}
\newtheorem{remark}{Remark}
\newenvironment{proof}[1][Proof]{\noindent\textbf{#1.} }{\ \rule{0.5em}{0.5em}}
\begin{document}

\title{Exponential inequalities for Mann's iterative scheme with functional
random errors}
\author{Bahia BARACHE$^{1}$, Idir ARAB$^{2},$ Abdelnasser\ DAHMANI$^{3}$ \\
$^{1}$Laboratoire de Math\'{e}matiques Appliqu\'{e}es\\
Facult\'{e} des Sciences Exactes, Universit\'{e} A.MIRA B\'{e}jaia,
Algerie\\
$^{2}$CMUC, Department of Mathematics, University of Coimbra, Portugal\\
$^{3}$Centre Universitaire de Tamanrasset\\
e-mail:$^{1}$Bahiabarache.maths@gmail.com; $^{2}$idir@mat.uc.pt\\
$^{3}$a\_dahmany@yahoo.fr}
\date{ }
\maketitle

\begin{abstract}
In this paper, we deal with an iteration method for approximating a fixed
point of a contraction mapping using the Mann's algorithm under functional
random errors. We first show its almost complete convergence to the fixed
point by mean of an exponential inequality and then we specify the induced
rate of convergence. We finally build a confidence set for the fixed point.%
\newline

\textbf{Keywords: }Fixed point-iteration; stochastic methods; Mann's
algorithm; almost complete convergence; rate of convergence; confidence set.%
\newline

\textbf{MSC: }15A29, 60-08, 60H35, 62L20, 62L10, 65C20
\end{abstract}

\section{Introduction}

The main objective of studies in the fixed point theory is to find solutions
for the following equation, which is commonly known as fixed point equation:%
\begin{equation}
F\left( x\right) =x  \label{pointfixe}
\end{equation}%
where $F$ is a self-map of an ambient space $X$ and $x\in X$.

The most well-known result in fixed point theory is Banach's contraction
mapping principle; it guarantees that a contraction mapping of a complete
metric space to itself has a unique fixed point which may be obtained as the
limit of an iteration scheme defined by repeated images under the mapping of
an arbitrary starting point in the space. As such, it is a constructive
fixed point theorem and hence, may be implemented for the numerical
computation of the fixed point.

To solve equations given by (\ref{pointfixe}), two types of methods are
normally used: direct methods and iterative methods. Due to various reasons,
direct methods can be impractical or fail in solving equations (\ref%
{pointfixe}) because it leads to the inversion of a certain function, thing
that is not easy to do and thus, iterative methods become a viable
alternative. For this reason, the iterative approximation of fixed points
has become one of the major and basic tools in the theory of equations.

Mann \cite{Man} introduced an iterative scheme and employed it to
approximate the solution of a fixed point problem defined by nonexpansive
mapping where Picard's iterative scheme fails to converge. Later, Ishikawa
\cite{Ish} introduced an iterative method to obtain the convergence of a
Lipschitzian pseudo-contractive operator when Mann's iterative scheme is not
applicable. Many authors studied the convergence theorems and stability
problems in Banach spaces and metric spaces (see; e.g. \cite%
{Ber,Ceg,Cha,Kan,Osi2,Pan,Sha,HXu}) using the Mann's iteration scheme or the
Ishikawa's iteration scheme in deterministic frame. Some theoretical results
on Mann-Ishikawa algorithm with errors can be found in various literatures,
(e.g. see \cite{Aga,Cha2,Hua,Kaz,Kim,Liu,Liu2,Liu3,ZLiu2,Osi,YXu,YXu2}).

In the last twenty years, many papers have been published on the random
fixed point theory. The study of random fixed point theory is playing an
increasing role in mathematics and engineering sciences. Recently, it
received considerable attention due to enormous applications in many
important areas such as nonlinear analysis, probability theory and for the
study of random equations arising in various engineering sciences.

Choudhury \cite{Cho,Cho1} has suggested and analyzed random Mann's iterative
sequence in separable Hilbert spaces for finding random solutions and random
fixed points for some kind of random equations and random operators. Okeke
and Kim \cite{Oke} introduced the random Picard-Mann hybrid iterative
process. They have established the strong convergence theorems and summable
almost $T$-stability of the random Picard-Mann hybrid iterative process and
the random Mann-type iterative process generated by a generalized class of
random operators in separable Banach spaces.

Chugh et al. \cite{Chu}\ studied the strong convergence and stability of a
new two-step random iterative scheme with errors for accretive Lipschitzian
mapping in real Banach spaces. In \cite{Cho0}, Cho et al. has built a random
Ishikawa's iterative sequence with errors for random strongly
pseudo-contractive operator in separable Banach spaces and proved that under
suitable conditions, this random iterative sequence with errors converges to
a random fixed point of the operator.

Saluja et al. \cite{Sal} proved that if a random Mann's iteration scheme
defined by two random operators is convergent under some contractive
inequality, the limit point is a common fixed point of each of two random
operators in Banach space.

In \cite{Ar}, a random fixed point theorem was obtained for the sum of a
weakly-strongly continuous random operator and a nonexpansive random
operator which contains as a special Krasnoselskii type of Edmund and
O'Regan via the method of measurable selectors. We note some recent works on
random fixed points in \cite{Abe,Agr,Beg,Chu0,Chu,Hus,Oke}.

In this paper, we deal with iteration methods for approximating a fixed
point of the function using the Mann's algorithm with functional random
errors. We first show its complete convergence to the fixed point by mean of
an exponential inequality. This inequality will allow us to specify a
convergence rate and the possibility of building a confidence set for the
present fixed point.

\subsection{Some fixed point algorithms}

Let $X$ be a normed linear space and $F:X\rightarrow X$ a given operator.
Let $x_{0}\in X$ be arbitrary. The sequence $(x_{n})_{n}\subset X$ defined by%
\begin{equation}
x_{n+1}=F\left( x_{n}\right)  \label{Picard}
\end{equation}%
is called the Picard's iteration \cite{Pic}.

The sequence $(x_{n})_{n}\subset X$ defined by
\begin{equation}
x_{n+1}=\left( 1-a_{n}\right) x_{n}+a_{n}F\left( x_{n}\right) ,n\in \mathbb{N%
}^{\ast }  \label{Mann}
\end{equation}%
where $(a_{n})_{n}$ is a real sequence of positive numbers satisfying the
following conditions%
\begin{eqnarray*}
1.\text{ } &&a_{0}=1 \\
2.\text{ } &&0\leq a_{n}<1,\forall \ n\in \mathbb{N}^{\ast } \\
3.\text{ } &&\sum_{n}a_{n}=+\infty
\end{eqnarray*}%
is called the Mann's iteration or Mann's iterative scheme \cite{Man}.

The sequence $(x_{n})_{n}\subset X$ defined by%
\begin{eqnarray}
x_{n+1} &=&\left( 1-a_{n}\right) x_{n}+a_{n}F\left( y_{n}\right) ,n\in
\mathbb{N}^{\ast }  \label{Ishikawa} \\
y_{n} &=&\left( 1-b_{n}\right) x_{n}+b_{n}F\left( x_{n}\right) ,n\in \mathbb{%
N}^{\ast }  \notag
\end{eqnarray}%
where $(a_{n})_{n}$ and $(b_{n})_{n}$ are real sequences of positive numbers
satisfying the conditions%
\begin{eqnarray*}
1.\text{ } &&0\leq a_{n},b_{n}<1\text{ for all }n \\
2.\text{ } &&\lim_{n\rightarrow +\infty }b_{n}=0 \\
3.\text{ } &&\sum_{n}a_{n}b_{n}=+\infty
\end{eqnarray*}

and $x_{0}\in X$ is arbitrary. This procedure is called the Ishikawa's
iteration or Ishikawa's iterative procedure \cite{Ish}.

The sequence $(x_{n})_{n}\subset X$ defined by

\begin{equation*}
x_{n+1}=\frac{1}{2}\left( F\left( x_{n}\right) +x_{n}\right)
\end{equation*}%
is called the Krasnoselskii's iteration \cite{Kra}.

\begin{remark}
For $a_{n}=\frac{1}{2}$, the iteration (\ref{Mann}) reduces to the so-called
Krasnoselskii's iteration while for $a_{n}=1$ we obtain the Picard's
iteration (\ref{Picard}), or the method of successive approximations, as it
is commonly known. Obviously, for $b_{n}=0$ the Ishikawa's iteration (\ref%
{Ishikawa}) reduces to (\ref{Mann}).
\end{remark}

\section{Preliminaries}

Let $\left( \Omega ,\mathcal{F},\mathbb{P}\right) $ be a probability space
and $\mathbb{B}$ a real separable Banach space. Let $\left( \mathbb{B},%
\mathfrak{B}\right) $ be a measurable space, where $\mathfrak{B}$ denotes
the $\sigma $-algebra of all Borel subsets generated by all open subsets in $%
\mathbb{B}$, and $F:\mathbb{B\rightarrow B}$ a contraction mapping.
\begin{equation*}
\forall \ x,y\in \mathbb{B},\left\Vert F\left( x\right) -F\left( y\right)
\right\Vert \leq c\left\Vert x-y\right\Vert ,c\in \left[ 0,1\right) .
\end{equation*}%
Under this condition, the Banach's fixed point theorem states that $F$ has a
unique fixed point $x^{\ast }$.

Let $\left( x_{n}\right) _{n}$ be a sequence obtained by a certain fixed
point iteration procedure that ensures its convergence to a fixed point $%
x^{\ast }$ of $F$. Specifically for the Mann's algorithm, when calculating $%
\left( x_{n}\right) _{n}$, we usually follow these steps:

\begin{enumerate}
\item We choose the initial approximation $x_{0}\in \mathbb{B}$;

\item We compute $x_{1}=\left( 1-a_{0}\right) x_{0}+a_{0}F\left(
x_{0}\right) $ but, due to various errors that occur during the computations
(rounding errors, numerical approximations of functions, derivatives or
integrals, etc.), we do not get the exact value of $x_{1}$, but a different
one, say $y_{1}$, which is however close enough to $x_{1}$, i.e., $%
y_{1}-x_{1}=\xi _{1}$.

\item Consequently, when computing $x_{2}=\left( 1-a_{1}\right)
x_{1}+a_{1}F\left( x_{1}\right) ,$ we will actually compute $x_{2}$ as $%
x_{2}=\left( 1-a_{1}\right) y_{1}+a_{1}F\left( y_{1}\right) $ and so,
instead of the theoretical value $x_{2}$, we will obtain in fact another
value, say $y_{2}$, again close enough to $x_{2}$, i.e., $y_{2}-x_{2}=\xi
_{2},$ $\cdots ,$ and so on.
\end{enumerate}

In this way, instead of the theoretical sequence $\left( x_{n}\right) _{n}$
defined by the given iterative method, we will practically obtain an
approximate sequence $\left( y_{n}\right) _{n}$. We shall consider the given
fixed point iteration method to be numerically stable if and only if, for $%
y_{n}$ close enough (in some sense) to $x_{n}$ at each stage, the
approximate sequence $\left( y_{n}\right) _{n}$ still converges to the fixed
point of $F.$ That is to say,%
\begin{equation*}
x_{n+1}=\left( 1-a_{n}\right) x_{n}+a_{n}F\left( x_{n}\right) +\xi _{n}.
\end{equation*}

Unfortunately, the definitions of Liu \cite{Liu}, which depend on the
convergence of the error terms, is against the randomness of errors. Hence,
we need a new definition as follows%
\begin{equation*}
x_{n+1}=\left( 1-a_{n}\right) x_{n}+a_{n}F\left( x_{n}\right) +b_{n}\xi _{n},
\end{equation*}%
with $\left( \xi _{n}\right) _{n}$ a sequence of independent functional
random variables denoting noise which is defined on $\left( \Omega ,\mathcal{%
F},\mathbb{P}\right) $ with values into Banach spaces $\mathbb{B}$.
Moreover, assume that $\left( \xi _{n}\right) _{n}$ is zero mean and $%
\underset{n}{\sup }\ \mathbb{E}\left\Vert \xi _{n}\right\Vert <\infty $.

In this paper, we use the following stochastic Mann's algorithm%
\begin{equation}
x_{n+1}=\left( 1-a_{n}\right) x_{n}+a_{n}F\left( x_{n}\right) +b_{n}\xi _{n},
\label{smi}
\end{equation}%
satisfying%
\begin{equation*}
\sum_{n=1}^{\infty }a_{n}=\infty
\begin{array}{ccc}
& \text{and} &
\end{array}%
\sum_{n=1}^{\infty }b_{n}<\infty .
\end{equation*}%
(The condition $\sum\limits_{n=1}^{+\infty }a_{n}=+\infty $ is sometimes
replaced by $\sum\limits_{n=1}^{\infty }a_{n}\left( 1-a_{n}\right) =+\infty $%
).

Without loss of generality, we take
\begin{equation*}
a_{n}=\frac{a}{n}\text{ and }b_{n}=\frac{a}{n^{2}}\ 
\end{equation*}

In this case, the stochastic Mann's algorithm (\ref{smi}) takes the form%
\begin{equation}
x_{n+1}=\left( 1-\frac{a}{n}\right) x_{n}+\frac{a}{n}\left[ F\left(
x_{n}\right) +\frac{1}{n}\xi _{n}\right] .  \label{mannstocha}
\end{equation}

\begin{lemma}
By using the formula of the algorithm (\ref{mannstocha}), one obtains for $%
\left\Vert x_{1}-x^{\ast }\right\Vert \leq N$
\begin{equation}
\left\Vert x_{n+1}-x^{\ast }\right\Vert \leq N\prod\limits_{i=1}^{n}\left(
1-\frac{a\left( 1-c\right) }{i}\right) +\sum\limits_{i=1}^{n}\frac{a}{i^{2}}%
\prod\limits_{j=i+1}^{n}\left( 1-\frac{a\left( 1-c\right) }{j}\right)
\left\Vert \xi _{i}\right\Vert .  \label{formule}
\end{equation}
\end{lemma}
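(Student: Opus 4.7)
The plan is to reduce the statement to a one-step deterministic recursion for $u_n := \|x_n - x^\ast\|$ and then iterate it by induction on $n$. The essential identity I would exploit first is that, since $x^\ast$ is a fixed point of $F$, it satisfies the trivial decomposition
\begin{equation*}
x^{\ast }=\left( 1-\tfrac{a}{n}\right) x^{\ast }+\tfrac{a}{n}F\left( x^{\ast }\right),
\end{equation*}
so subtracting this from the defining recursion \eqref{mannstocha} gives
\begin{equation*}
x_{n+1}-x^{\ast }=\left( 1-\tfrac{a}{n}\right)\!\left( x_{n}-x^{\ast }\right)+\tfrac{a}{n}\!\left( F(x_{n})-F(x^{\ast })\right)+\tfrac{a}{n^{2}}\xi _{n}.
\end{equation*}

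Next I would take norms, use the triangle inequality, and invoke the contraction hypothesis $\|F(x_n)-F(x^\ast)\|\le c\|x_n-x^\ast\|$. Collecting the coefficients of $u_n=\|x_n-x^\ast\|$ gives $(1-a/n)+(a/n)c=1-a(1-c)/n$, so one obtains the scalar recursion
\begin{equation*}
u_{n+1}\;\le\;\left(1-\tfrac{a(1-c)}{n}\right)u_n\;+\;\tfrac{a}{n^{2}}\|\xi _{n}\|.
\end{equation*}
Write $\alpha_n:=1-a(1-c)/n$ and $\beta_n:=(a/n^2)\|\xi_n\|$.

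Finally I would iterate this recursion by induction on $n$. The base case $n=1$ follows from the assumption $u_1\le N$. For the inductive step, substituting the inductive hypothesis into $u_{n+1}\le \alpha_n u_n+\beta_n$ and pulling $\alpha_n$ through the sum (with the convention $\prod_{j=n+1}^{n}=1$) yields
\begin{equation*}
u_{n+1}\;\le\;N\prod_{i=1}^{n}\alpha_i\;+\;\sum_{i=1}^{n}\beta_i\prod_{j=i+1}^{n}\alpha_j,
\end{equation*}
which is exactly \eqref{formule}.

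The only mildly delicate point, and what I would flag as the main technical issue, is that for the iteration to preserve the direction of the inequality one needs the factors $\alpha_n=1-a(1-c)/n$ to be nonnegative. Since $0<1-c\le 1$, one has $\alpha_n\ge 1-a/n$, which is already required to be nonnegative for the convex combination in the Mann scheme to be meaningful; thus for all indices $n\ge a$ the factors are automatically in $[0,1)$, and the induction goes through without further care. (For small $n<a$ one may either absorb finitely many initial steps into the constant $N$ or restrict to the natural regime $a\le 1$, under which $\alpha_n\in[0,1)$ for every $n\ge1$.)
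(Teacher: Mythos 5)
Your proposal is correct and follows essentially the same route as the paper: subtract the fixed-point identity from the recursion, apply the triangle inequality and the contraction bound to get $\|x_{n+1}-x^{\ast}\|\le\bigl(1-\tfrac{a(1-c)}{n}\bigr)\|x_{n}-x^{\ast}\|+\tfrac{a}{n^{2}}\|\xi_{n}\|$, and iterate. Your closing remark about needing $1-\tfrac{a(1-c)}{n}\ge 0$ is a worthwhile point the paper leaves implicit, though it is covered by the standing assumption $0<a<1$ made elsewhere in the paper.
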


\begin{proof}
By adding and subtracting $x^{\ast }$ and using that $F(x^{\ast })=x^{\ast }$%
, we obtain%
\begin{equation*}
x_{n+1}-x^{\ast }=\left( 1-\frac{a}{n}\right) \left( x_{n}-x^{\ast }\right) +%
\frac{a}{n}\left[ F\left( x_{n}\right) -F\left( x^{\ast }\right) +\frac{1}{n}%
\xi _{n}\right] .
\end{equation*}

Using the last formula and the contraction of $F$, we get%
\begin{eqnarray*}
\left\Vert x_{n+1}-x^{\ast }\right\Vert &\leq &\left( 1-\frac{a\left(
1-c\right) }{n}\right) \left\Vert x_{n}-x^{\ast }\right\Vert +\frac{a}{n^{2}}%
\left\Vert \xi _{n}\right\Vert \\
&\leq &\left\Vert x_{1}-x^{\ast }\right\Vert \prod\limits_{i=1}^{n}\left( 1-%
\frac{a\left( 1-c\right) }{i}\right) +\sum\limits_{i=1}^{n}\frac{a}{i^{2}}%
\prod\limits_{j=i+1}^{n}\left( 1-\frac{a\left( 1-c\right) }{j}\right)
\left\Vert \xi _{i}\right\Vert \\
&\leq &N\prod\limits_{i=1}^{n}\left( 1-\frac{a\left( 1-c\right) }{i}\right)
+\sum\limits_{i=1}^{n}\frac{a}{i^{2}}\prod\limits_{j=i+1}^{n}\left( 1-%
\frac{a\left( 1-c\right) }{j}\right) \left\Vert \xi _{i}\right\Vert ,
\end{eqnarray*}%
as required.
\end{proof}

\begin{lemma}
For all positive constant $a$ such that $0<a<1,$ we have the following
inequality%
\begin{equation}
\prod\limits_{j=i+1}^{n}\left( 1-\frac{a\left( 1-c\right) }{j}\right) \leq
\left( \frac{i+1}{n+1}\right) ^{a\left( 1-c\right) }.  \label{produit}
\end{equation}
\end{lemma}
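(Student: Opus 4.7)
The plan is to convert the product into an exponential sum via the elementary inequality $1-x \leq e^{-x}$ for $x \in [0,1]$, then bound the resulting harmonic sum below by an integral. Since $0 < a < 1$ and $c \in [0,1)$, we have $a(1-c)/j \in [0,1]$ for all $j \geq 1$, so the inequality $1 - a(1-c)/j \leq \exp(-a(1-c)/j)$ is legitimate term by term. Multiplying over $j = i+1, \dots, n$ I obtain
\[
\prod_{j=i+1}^{n}\left(1-\frac{a(1-c)}{j}\right) \leq \exp\!\left(-a(1-c)\sum_{j=i+1}^{n}\frac{1}{j}\right).
\]

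Next, because $x \mapsto 1/x$ is decreasing on $(0,\infty)$, we have $1/j \geq \int_j^{j+1} dx/x$ for each $j$, and summing telescopically yields
\[
\sum_{j=i+1}^{n}\frac{1}{j} \;\geq\; \int_{i+1}^{n+1}\frac{dx}{x} \;=\; \log\!\left(\frac{n+1}{i+1}\right).
\]
Substituting into the exponential bound and using that $-a(1-c) \leq 0$ (so the exponential is monotone decreasing in the exponent's magnitude) gives
\[
\exp\!\left(-a(1-c)\sum_{j=i+1}^{n}\frac{1}{j}\right) \leq \exp\!\left(-a(1-c)\log\frac{n+1}{i+1}\right) = \left(\frac{i+1}{n+1}\right)^{a(1-c)},
\]
which is exactly (\ref{produit}).

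There is no real obstacle here; the only point requiring a pinch of care is the direction of the integral comparison (one needs the lower Riemann sum for $\int dx/x$, obtained by evaluating at the right endpoint of each unit interval, which is why the bounds $i+1$ and $n+1$ appear rather than $i$ and $n$). The hypothesis $0 < a < 1$ is used solely to guarantee that $a(1-c)/j \in [0,1]$ so that $1-x \leq e^{-x}$ may be applied; otherwise the product could contain negative or badly behaved factors.
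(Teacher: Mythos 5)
Your proof is correct and follows exactly the same route as the paper, which simply writes the chain $\prod(1-\tfrac{a(1-c)}{j})\leq\exp(-a(1-c)\sum\tfrac1j)\leq(\tfrac{i+1}{n+1})^{a(1-c)}$ without justification; you supply the two missing ingredients ($1-x\leq e^{-x}$ applied to nonnegative factors, and the integral comparison $\sum_{j=i+1}^{n}\tfrac1j\geq\log\tfrac{n+1}{i+1}$). No issues.
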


\begin{proof}
We have,

\begin{equation*}
\prod\limits_{j=i+1}^{n}\left( 1-\frac{a\left( 1-c\right) }{j}\right) \leq
\exp \left( -a\left( 1-c\right) \sum_{j=i+1}^{n}\frac{1}{j}\right) \leq
\left( \frac{i+1}{n+1}\right) ^{a\left( 1-c\right) },
\end{equation*}%
which is what had to be shown.
\end{proof}

\section{Main results}

\subsection{Exponential inequalities}

In this subsection, we establish an exponential inequality of
Bernstein-Frechet type for the stochastic Mann's scheme.

\begin{theorem}
\label{Theoreme}For all $\varepsilon >0,$ if for some constants $\sigma $
and $L>0$ the inequalities%
\begin{equation}
\mathbb{E}\left\Vert \xi _{i}\right\Vert ^{m}\leq \frac{m!}{2}\sigma
^{2}L^{m-2}  \label{Cramer}
\end{equation}%
are fulfilled, and if we denote by%
\begin{equation*}
S_{1}=\sum_{i=1}^{\infty }\frac{\left( i+1\right) ^{a\left( 1-c\right) }}{%
i^{2}}\text{ and }S_{2}=4a^{2}\sigma ^{2}\sum_{i=1}^{\infty }\frac{\left(
i+1\right) ^{2a\left( 1-c\right) }}{i^{4}}
\end{equation*}%
then%
\begin{equation}
\mathbb{P}\left\{ \left\Vert x_{n+1}-x^{\ast }\right\Vert >\varepsilon
\right\} \leq K_{1}\exp \left( -K_{2}n^{2a\left( 1-c\right) -\rho
}\varepsilon ^{2}\right)  \label{inegalite}
\end{equation}%
where
\begin{equation*}
0<\rho <2a\left( 1-c\right) ,\ K_{1}\leq \exp \left( 2\left( N^{2}+\left(
aS_{1}\max_{i}\mathbb{E}\left\Vert \xi _{i}\right\Vert \right) ^{2}\right)
\right) \ \text{and }K_{2}=\min \left( 1,\frac{1}{16S_{2}}\right) .
\end{equation*}
\end{theorem}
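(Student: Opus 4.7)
The first step is to turn the almost-sure recursion of Lemma 1 into a tail estimate. Applying Lemma 2 with $i=0$ gives $\prod_{j=1}^{n}(1-a(1-c)/j)\le (n+1)^{-a(1-c)}$, so multiplying (\ref{formule}) by $(n+1)^{a(1-c)}$ and setting $\alpha_{i}:=a(i+1)^{a(1-c)}/i^{2}$ yields the deterministic majorization
\[
(n+1)^{a(1-c)}\|x_{n+1}-x^{*}\|\;\le\;N+\sum_{i=1}^{n}\alpha_{i}\,\|\xi_{i}\|.
\]
Since $\sum_{i}\alpha_{i}\le aS_{1}$, the event $\{\|x_{n+1}-x^{*}\|>\varepsilon\}$ is contained in $\{\sum_{i}\alpha_{i}\|\xi_{i}\|>(n+1)^{a(1-c)}\varepsilon-N\}$, so everything reduces to a tail estimate on a weighted sum of the noise norms.

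I would then centre the noise. Writing $Z_{i}:=\|\xi_{i}\|-\mathbb{E}\|\xi_{i}\|$, the deterministic piece $\sum_i \alpha_i \mathbb{E}\|\xi_i\|$ is bounded by $aS_{1}\max_{i}\mathbb{E}\|\xi_{i}\|$, and the triangle inequality applied to (\ref{Cramer}) yields $\mathbb{E}|Z_{i}|^{m}\le \tfrac{m!}{2}(2\sigma)^{2}(2L)^{m-2}$. Letting $M:=N+aS_{1}\max_{i}\mathbb{E}\|\xi_{i}\|$, the event sits inside $\{\sum_{i}\alpha_{i}Z_{i}>(n+1)^{a(1-c)}\varepsilon-M\}$. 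The independent, centred, Cram\'er-dominated summands $(\alpha_{i}Z_{i})$ have aggregate variance proxy $\sum_{i}(2\sigma\alpha_{i})^{2}=S_{2}$ and individual sub-exponential scale $2L\max_{i}\alpha_{i}$, so the classical Bernstein--Frechet inequality (exponential Chebyshev, then optimization of the Laplace parameter) gives
\[
\mathbb{P}\!\left(\sum_{i}\alpha_{i}Z_{i}>t\right)\le \exp\!\left(-\frac{t^{2}}{2\bigl(S_{2}+2L(\max_{i}\alpha_{i})\,t\bigr)}\right).
\]

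The final step is to substitute $t=(n+1)^{a(1-c)}\varepsilon-M$ and reshape the exponent into $K_{2}\,n^{2a(1-c)-\rho}\,\varepsilon^{2}$. The inequality $(t-M)^{2}\ge \tfrac{1}{2}t^{2}-M^{2}$ isolates the target quadratic dependence on $(n+1)^{a(1-c)}\varepsilon$ while leaking a prefactor $\exp(c\,M^{2})$, which by $M^{2}\le 2N^{2}+2(aS_{1}\max_{i}\mathbb{E}\|\xi_{i}\|)^{2}$ is absorbed into the stated $K_{1}\le \exp(2(N^{2}+(aS_{1}\max_{i}\mathbb{E}\|\xi_{i}\|)^{2}))$. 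The free parameter $\rho\in(0,2a(1-c))$ is then introduced to tame the sub-exponential cross term $2L(\max_{i}\alpha_{i})\,t$ in the denominator: bounding $S_{2}+2L(\max_{i}\alpha_{i})\,t\le C\,n^{\rho}$ (legitimate because $\max_{i}\alpha_{i}$ is a fixed constant and $t=O(n^{a(1-c)}\varepsilon)$) shifts the exponent from $n^{2a(1-c)}$ down to $n^{2a(1-c)-\rho}$ and delivers the choice $K_{2}=\min(1,1/(16S_{2}))$. I expect this calibration to be the main obstacle: the raw Bernstein estimate is sub-Gaussian only for moderate $t$ and turns sub-exponential for large $t$, so the claimed Gaussian-type tail must be established uniformly in $\varepsilon>0$ by trading part of the exponent against the loss factor $n^{-\rho}$ and simultaneously keeping the overhead constant in the explicit form stated for $K_{1}$.
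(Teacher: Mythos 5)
Your plan follows essentially the same route as the paper: reduce via Lemmas 1--2 to a weighted sum of the centred norms $\left\Vert \xi _{i}\right\Vert -\mathbb{E}\left\Vert \xi _{i}\right\Vert$ with weights $a(i+1)^{a(1-c)}/i^{2}$, absorb the deterministic drift into $K_{1}$, run the exponential-Chebyshev/Bernstein argument under Cram\'er's condition with the same variance proxy $S_{2}$, and trade an $n^{-\rho}$ loss against the exponent to reconcile the admissible range of the Laplace parameter with its optimizer. The only differences are presentational (you subtract the drift from the threshold where the paper splits the event at $\varepsilon /2$, and you invoke Bernstein as a black box where the paper rederives it from the moment expansion), and the large-$\varepsilon$ calibration issue you flag at the end is equally present in the paper's own proof, which only resolves the compatibility of its two choices of $t$ for $n$ large depending on $\varepsilon$.
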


\begin{proof}
Using basic properties of probability and formula (\ref{formule}), we have%
\begin{eqnarray}
\mathbb{P}\left\{ \left\Vert x_{n+1}-x^{\ast }\right\Vert >\varepsilon
\right\} &\leq &\mathbb{P}\left\{ N\prod\limits_{i=1}^{n}\left( 1-\frac{%
a\left( 1-c\right) }{i}\right) +\sum\limits_{i=1}^{n}\frac{a}{i^{2}}%
\prod\limits_{j=i+1}^{n}\left( 1-\frac{a\left( 1-c\right) }{j}\right)
\left\Vert \xi _{i}\right\Vert >\varepsilon \right\}  \notag \\
&\leq &\mathbb{P}\left\{ N\prod\limits_{i=1}^{n}\left( 1-\frac{a\left(
1-c\right) }{i}\right) +\sum\limits_{i=1}^{n}\frac{a}{i^{2}}%
\prod\limits_{j=i+1}^{n}\left( 1-\frac{a\left( 1-c\right) }{j}\right)
\mathbb{E}\left\Vert \xi _{i}\right\Vert \geq \frac{\varepsilon }{2}\right\}
\notag \\
&&+\ \mathbb{P}\left\{ \sum\limits_{i=1}^{n}\frac{a}{i^{2}}%
\prod\limits_{j=i+1}^{n}\left( 1-\frac{a\left( 1-c\right) }{j}\right)
\left( \left\Vert \xi _{i}\right\Vert -\mathbb{E}\left\Vert \xi
_{i}\right\Vert \right) >\frac{\varepsilon }{2}\right\} .  \label{sommation}
\end{eqnarray}%
Let us define
\begin{equation*}
\zeta _{i}=\left\Vert \xi _{i}\right\Vert -\mathbb{E}\left\Vert \xi
_{i}\right\Vert .
\end{equation*}%
It is clear that $\mathbb{E}\zeta _{i}=0$ and $\mathbb{E}\left\vert \zeta
_{i}\right\vert ^{m}\leq 2m!\sigma ^{2}\left( 2L\right) ^{m-2}.$

Firstly, we have%
\begin{equation}
\mathbb{P}\left\{ N\prod\limits_{i=1}^{n}\left( 1-\frac{a\left( 1-c\right)
}{i}\right) +\sum\limits_{i=1}^{n}\frac{a}{i^{2}}\prod\limits_{j=i+1}^{n}%
\left( 1-\frac{a\left( 1-c\right) }{j}\right) \mathbb{E}\left\Vert \xi
_{i}\right\Vert >\frac{\varepsilon }{2}\right\} \leq K_{1}e^{-n^{2a\left(
1-c\right) -\rho }\varepsilon ^{2}}.  \label{terme1}
\end{equation}%
where
\begin{equation*}
K_{1}\leq \exp \left( 2\left( N^{2}+\left( aS_{1}\max_{i}\mathbb{E}%
\left\Vert \xi _{i}\right\Vert \right) ^{2}\right) \right) .
\end{equation*}%
On the other hand, under Markov inequality, we have for all $t>0,$%
\begin{eqnarray*}
&&\mathbb{P}\left\{ \sum\limits_{i=1}^{n}\frac{a}{i^{2}}\prod%
\limits_{j=i+1}^{n}\left( 1-\frac{a\left( 1-c\right) }{j}\right) \left(
\left\Vert \xi _{i}\right\Vert -\mathbb{E}\left\Vert \xi _{i}\right\Vert
\right) >\frac{\varepsilon }{2}\right\} \\
&=&\mathbb{P}\left\{ \sum\limits_{i=1}^{n}\frac{at\left( n+1\right)
^{a\left( 1-c\right) }}{i^{2}}\prod\limits_{j=i+1}^{n}\left( 1-\frac{%
a\left( 1-c\right) }{j}\right) \zeta _{i}>\frac{\varepsilon t\left(
n+1\right) ^{a\left( 1-c\right) }}{2}\right\} \\
&\leq &\exp \left( -\frac{t\varepsilon \left( n+1\right) ^{a\left(
1-c\right) }}{2}\right) \mathbb{E}\exp \left( t\sum\limits_{i=1}^{n}\frac{%
a\left( n+1\right) ^{a\left( 1-c\right) }}{i^{2}}\prod\limits_{j=i+1}^{n}%
\left( 1-\frac{a\left( 1-c\right) }{j}\right) \zeta _{i}\right) .
\end{eqnarray*}

The functions $x\longmapsto \left\Vert x\right\Vert $ and $x\longmapsto
e^{x} $ are continuous, and hence are Borel functions. Therefore, the random
variables%
\begin{equation*}
\exp \left( \sum\limits_{i=1}^{n}\frac{at\left( n+1\right) ^{a\left(
1-c\right) }}{i^{2}}\prod\limits_{j=i+1}^{n}\left( 1-\frac{a\left(
1-c\right) }{j}\right) \zeta _{i}\right)
\end{equation*}%
are also independent. And so,%
\begin{eqnarray*}
&&\mathbb{E}\exp \left( \sum\limits_{i=1}^{n}\frac{at\left( n+1\right)
^{a\left( 1-c\right) }}{i^{2}}\prod\limits_{j=i+1}^{n}\left( 1-\frac{%
a\left( 1-c\right) }{j}\right) \zeta _{i}\right) \\
&=&\prod\limits_{i=1}^{n}\mathbb{E}\exp \left( \frac{at\left( n+1\right)
^{a\left( 1-c\right) }}{i^{2}}\prod\limits_{j=i+1}^{n}\left( 1-\frac{%
a\left( 1-c\right) }{j}\right) \zeta _{i}\right) .
\end{eqnarray*}

The expansion of the exponential function around zero, inequality (\ref%
{produit}) as well as Cramer's condition (\ref{Cramer}) give us,%
\begin{eqnarray*}
&&\mathbb{E}\exp \left( \frac{at}{i^{2}}\prod\limits_{j=i+1}^{n}\left( 1-%
\frac{a\left( 1-c\right) }{j}\right) \zeta _{i}\right) \leq
1+\sum_{m=2}^{+\infty }\frac{a^{m}t^{m}\mathbb{E}\left\vert \zeta
_{i}\right\vert ^{m}}{i^{2m}m!}\left( \frac{i+1}{n+1}\right) ^{a\left(
1-c\right) m} \\
&\leq &1+\frac{2a^{2}t^{2}\sigma ^{2}}{i^{4}}\left( \frac{i+1}{n+1}\right)
^{2a\left( 1-c\right) }\sum_{m=2}^{+\infty }\frac{a^{m-2}t^{m-2}\left(
2L\right) ^{m-2}}{i^{2\left( m-2\right) }}\left( \frac{i+1}{n+1}\right)
^{a\left( 1-c\right) \left( m-2\right) }
\end{eqnarray*}

Note that the function $x\longmapsto \frac{\left( x+1\right) ^{a\left(
1-c\right) }}{x^{2}}$ is decreasing and its maximum on the interval $\left[
1,+\infty \right) $ is $2^{a\left( 1-c\right) }.$ Thus, for suitably chosen $%
t$, e.g.
\begin{equation}
t\leq \frac{\left( n+1\right) ^{a\left( 1-c\right) }}{2^{a\left( 1-c\right)
+2}aL}  \label{t}
\end{equation}

and using the following inequality, $1+x\leq e^{x},$ we get
\begin{equation*}
\prod\limits_{i=1}^{n}\mathbb{E}\exp \frac{at}{i^{2}}\prod%
\limits_{j=i+1}^{n}\left( 1-\frac{a\left( 1-c\right) }{j}\zeta _{i}\right)
\leq \exp \left( \sum_{i=1}^{n}\frac{4a^{2}t^{2}\sigma ^{2}}{i^{4}}\left(
\frac{i+1}{n+1}\right) ^{2a\left( 1-c\right) }\right) .
\end{equation*}%
Consequently,%
\begin{eqnarray}
\mathbb{P}\left\{ \sum\limits_{i=1}^{n}\frac{a}{i^{2}}\prod%
\limits_{j=i+1}^{n}\left( 1-\frac{a\left( 1-c\right) }{j}\right) \zeta _{i}>%
\frac{\varepsilon }{2}\right\} &\leq &\exp \left( -\frac{\varepsilon t}{2}%
+\sum_{i=1}^{n}\frac{4a^{2}t^{2}\sigma ^{2}}{i^{4}}\left( \frac{i+1}{n+1}%
\right) ^{2a\left( 1-c\right) }\right)  \notag \\
&\leq &\exp \left( -\frac{\varepsilon t}{2}+\frac{t^{2}S_{2}}{n^{2a\left(
1-c\right) -\rho }}\right) .  \label{tmin}
\end{eqnarray}

The quantity on the right-hand side of (\ref{tmin}) is minimal at
\begin{equation}
t^{\ast }=\frac{\varepsilon n^{2a\left( 1-c\right) -\rho }}{4S_{2}}.
\label{t*}
\end{equation}%
Thus, by substituting $t^{\ast }$ in (\ref{tmin}), we obtain%
\begin{equation}
\mathbb{P}\left\{ \sum\limits_{i=1}^{n}\frac{a}{i^{2}}\prod%
\limits_{j=i+1}^{n}\left( 1-\frac{a\left( 1-c\right) }{j}\right) \zeta _{i}>%
\frac{\varepsilon }{2}\right\} \leq \exp \left( -\frac{\varepsilon
^{2}n^{2a\left( 1-c\right) -\rho }}{16S_{2}}\right) .  \label{terme2}
\end{equation}%
The conclusion of theorem (\ref{Theoreme}) can be obtained from (\ref%
{sommation}), (\ref{terme1}) and (\ref{terme2}) immediately.
\end{proof}

\begin{remark}
The condition (\ref{Cramer}) is known under Cramer's condition and the first
example that pops to our head \ is the bounded random variables and also the
normal random variables.
\end{remark}

\begin{remark}
Notice that both choices of $t$ in (\ref{t}) and (\ref{t*}) are not
contradictory. Indeed,%
\begin{equation*}
\lim_{n\rightarrow +\infty }\frac{\left( n+1\right) ^{a\left( 1-c\right) }}{%
n^{a\left( 1-c\right) -\rho }}=+\infty \Longleftrightarrow \forall A\in
\mathbb{R}^{+},\exists ~n_{0}\in \mathbb{N}:n\geq n_{0}\Longrightarrow \frac{%
\left( n+1\right) ^{a\left( 1-c\right) }}{n^{a\left( 1-c\right) -\rho }}>A.
\end{equation*}

For $A=\frac{2^{a\left( 1-c\right) +2}aLS_{2}\varepsilon }{4S_{2}},$ we have
\begin{equation*}
\frac{\varepsilon n^{a\left( 1-c\right) -\rho }}{4S_{2}}<\frac{\left(
n+1\right) ^{a\left( 1-c\right) }}{2^{a\left( 1-c\right) +2}aL}.
\end{equation*}
\end{remark}

\subsection{Almost complete convergence}

As a direct consequence of theorem (\ref{Theoreme}), we obtain the almost
complete convergence ($a.co$.) of the Mann's stochastic scheme.

\begin{corollary}
Under the assumptions of theorem (\ref{Theoreme}), the algorithm (\ref{smi})
converges almost completely ($a.co.$) to the unique fixed-point $x^{\ast }$
of $F.$
\end{corollary}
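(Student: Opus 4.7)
The plan is to unpack the definition of almost complete convergence and then read it off directly from the exponential bound proved in Theorem \ref{Theoreme}. Recall that a sequence $(x_{n})_{n}$ of $\mathbb{B}$-valued random elements converges almost completely to $x^{\ast }$ if, for every $\varepsilon >0$,
\begin{equation*}
\sum_{n=1}^{\infty }\mathbb{P}\bigl\{\left\Vert x_{n+1}-x^{\ast }\right\Vert >\varepsilon \bigr\}<\infty ,
\end{equation*}
which in turn implies almost sure convergence via the Borel--Cantelli lemma. So the entire task reduces to verifying summability of the tail probabilities.

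First, I would fix an arbitrary $\varepsilon >0$ and choose $\rho $ with $0<\rho <2a(1-c)$, as permitted in Theorem \ref{Theoreme}. Set $\alpha :=2a(1-c)-\rho >0$. Applying the exponential inequality (\ref{inegalite}) from Theorem \ref{Theoreme} gives, for every $n\geq 1$,
\begin{equation*}
\mathbb{P}\bigl\{\left\Vert x_{n+1}-x^{\ast }\right\Vert >\varepsilon \bigr\}\leq K_{1}\exp \bigl(-K_{2}\varepsilon ^{2}n^{\alpha }\bigr),
\end{equation*}
where $K_{1}$ and $K_{2}$ are the positive constants identified in the theorem (independent of $n$). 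Summing over $n$ reduces the question to convergence of
\begin{equation*}
\sum_{n=1}^{\infty }\exp \bigl(-K_{2}\varepsilon ^{2}n^{\alpha }\bigr).
\end{equation*}

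Next, I would verify that this series converges. Since $\alpha >0$ and $K_{2}\varepsilon ^{2}>0$, for $n$ large enough one has $K_{2}\varepsilon ^{2}n^{\alpha }\geq 2\log n$, so that $\exp (-K_{2}\varepsilon ^{2}n^{\alpha })\leq n^{-2}$, and the standard comparison test finishes the job. Equivalently, an integral-test argument on $\int_{1}^{\infty }\exp (-K_{2}\varepsilon ^{2}t^{\alpha })\,dt$ yields the same conclusion. Consequently
\begin{equation*}
\sum_{n=1}^{\infty }\mathbb{P}\bigl\{\left\Vert x_{n+1}-x^{\ast }\right\Vert >\varepsilon \bigr\}\leq K_{1}\sum_{n=1}^{\infty }\exp \bigl(-K_{2}\varepsilon ^{2}n^{\alpha }\bigr)<\infty ,
\end{equation*}
which establishes almost complete convergence of the scheme (\ref{smi}) to $x^{\ast }$. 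Uniqueness of $x^{\ast }$ is already guaranteed by Banach's fixed point theorem under the contraction hypothesis on $F$.

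There is no real obstacle here: once Theorem \ref{Theoreme} is in hand, the corollary is a one-line consequence, the only mildly delicate point being to notice that the condition $\rho <2a(1-c)$ is precisely what makes the exponent $\alpha $ strictly positive and hence the series summable uniformly in $\varepsilon $.
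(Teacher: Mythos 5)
Your proposal is correct and follows essentially the same route as the paper: invoke the exponential inequality of Theorem \ref{Theoreme} and observe that the resulting series of tail probabilities converges, which is exactly the definition of almost complete convergence. The only difference is that you spell out why $\sum_{n}\exp(-K_{2}\varepsilon^{2}n^{\alpha})$ converges (comparison with $n^{-2}$), a detail the paper leaves implicit.
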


\begin{proof}
Indeed, since the series of general term%
\begin{equation}
u_{n}=K_{1}\exp \left( -K_{2}n^{a\left( 1-c\right) -\rho }\varepsilon
^{2}\right)
\end{equation}%
is convergent, we have, for all $\varepsilon >0,$%
\begin{equation}
\sum\limits_{n=1}^{\infty }\mathbb{P}\left\{ \left\Vert x_{n+1}-x^{\ast
}\right\Vert >\varepsilon \right\} <+\infty ,
\end{equation}%
which ensures the almost complete convergence.
\end{proof}

\begin{remark}
Notice that if $\left( x_{n}\right) _{n}$ converges almost completely
towards $x^{\ast }$ then it also converges almost surely to $x^{\ast }$. In
other words, if the sequence $\left( x_{n}\right) _{n}$ converges in
probability to $x^{\ast }$ sufficiently quickly (i.e. the above sequence of
tail probabilities is summable for all $\varepsilon >0$), then the sequence $%
\left( x_{n}\right) _{n}$ also converges almost surely to $x^{\ast }$. This
is a direct implication from the Borel--Cantelli's lemma.
\end{remark}

\subsection{Confidence set}

In this subsection, we build a confidence set for the fixed point of a
contraction mapping determined by the stochastic Mann's algorithm.

\begin{corollary}
Under the assumptions of theorem (\ref{Theoreme}), for a given level $\alpha
$, there is a natural integer $n_{\alpha }$ for which the fixed point $%
x^{\ast }$ of $F$ belongs to the closed ball of center $x_{n_{\alpha }+1}$
and radius $\varepsilon $ with a probability greater than or equal to $%
1-\alpha $. In other words,
\begin{equation}
\forall \ \varepsilon >0,\forall \ \alpha >0,\exists \ n_{\alpha }\in
\mathbb{N}:\mathbb{P}\left\{ \left\Vert x_{n_{\alpha }+1}-x^{\ast
}\right\Vert \leq \varepsilon \right\} \geq 1-\alpha .  \label{ic}
\end{equation}
\end{corollary}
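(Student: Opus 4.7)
The plan is to deduce this from Theorem \ref{Theoreme} by a direct inversion of the exponential tail bound, since the event $\{\|x_{n_{\alpha}+1}-x^{\ast}\|\le \varepsilon\}$ is the complement of the event controlled in (\ref{inegalite}). So the task reduces to exhibiting an index $n_{\alpha}$ large enough to push the right-hand side of (\ref{inegalite}) below $\alpha$.

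First I would write, by passing to the complement,
\begin{equation*}
\mathbb{P}\left\{\left\Vert x_{n+1}-x^{\ast}\right\Vert\le\varepsilon\right\}\ge 1-\mathbb{P}\left\{\left\Vert x_{n+1}-x^{\ast}\right\Vert>\varepsilon\right\}\ge 1-K_{1}\exp\left(-K_{2}n^{2a(1-c)-\rho}\varepsilon^{2}\right),
\end{equation*}
where the second inequality is (\ref{inegalite}). Next, I would require the bound $K_{1}\exp(-K_{2}n^{2a(1-c)-\rho}\varepsilon^{2})\le\alpha$, which after taking logarithms and using the standing assumption $2a(1-c)-\rho>0$ gives the explicit threshold
\begin{equation*}
n\ge\left(\frac{1}{K_{2}\varepsilon^{2}}\ln\frac{K_{1}}{\alpha}\right)^{\!\!1/(2a(1-c)-\rho)}.
\end{equation*}

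It then suffices to define $n_{\alpha}$ as the smallest integer satisfying the above inequality (with the convention $\ln(K_{1}/\alpha)\le 0$ handled by taking $n_{\alpha}=1$, since the exponential bound is then already $\le\alpha$). Substituting back yields (\ref{ic}).

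Since the construction is literally solving the scalar inequality $K_{1}e^{-K_{2}n^{\beta}\varepsilon^{2}}\le\alpha$ with $\beta=2a(1-c)-\rho>0$, there is no genuine obstacle; the only point requiring minor attention is verifying that the admissible range $0<\rho<2a(1-c)$ imposed in Theorem \ref{Theoreme} indeed guarantees $\beta>0$, so that the map $n\mapsto n^{\beta}$ diverges and the threshold is finite for every $\alpha,\varepsilon>0$. The constants $K_{1},K_{2}$ do not depend on $n$, so everything carries through without further estimates.
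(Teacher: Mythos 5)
Your proposal is correct and follows essentially the same route as the paper: invert the exponential tail bound (\ref{inegalite}) and choose $n_{\alpha}$ large enough that $K_{1}\exp\left(-K_{2}n^{2a(1-c)-\rho}\varepsilon^{2}\right)\leq\alpha$, which is possible because the exponent $2a(1-c)-\rho$ is positive. The only difference is that you solve the threshold inequality explicitly for $n_{\alpha}$, whereas the paper merely invokes that the bound tends to $0$; this is a harmless refinement, and in fact your use of the exponent $2a(1-c)-\rho$ matches the theorem statement more faithfully than the paper's own proof, which writes $a(1-c)-\rho$.
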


\begin{proof}
We have,
\begin{equation}
\lim_{n\rightarrow +\infty }K_{1}\exp \left( -K_{2}n^{a\left( 1-c\right)
-\rho }\varepsilon ^{2}\right) =0.
\end{equation}%
Since there exists a natural integer $n_{\alpha }$ such that%
\begin{equation}
\forall \ n\in \mathbb{N},n\geq n_{\alpha }\Longrightarrow K_{1}\exp \left(
-K_{2}n^{a\left( 1-c\right) -\rho }\varepsilon ^{2}\right) \leq \alpha ,
\label{fin}
\end{equation}%
then, (\ref{ic}) arises from (\ref{inegalite}) and (\ref{fin}).
\end{proof}

\begin{remark}
Conversely, if the sample size is given, we can also determine by (\ref%
{inegalite}) the level of significance $\alpha $ required in the
construction of the confidence set.
\end{remark}

\subsection{Rate of convergence}

In this subsection, we study the rate of convergence of \ the Mann's
stochastic algorithm (\ref{mannstocha}). We say that $x_{n}-x^{\ast
}=O\left( r_{n}\right) ,$ almost completely ($a.co.$) where $\left(
r_{n}\right) _{n}$ is a sequence of real positive numbers, if there exists $%
\epsilon _{0}>0,\epsilon _{0}=O\left( 1\right) $ such that

\begin{equation*}
\sum_{n=1}^{+\infty }\mathbb{P}\left\{ \left\Vert x_{n}-x^{\ast }\right\Vert
>\epsilon _{0}r_{n}\right\} <+\infty .
\end{equation*}

\begin{theorem}
For all $a>0$ satisfying $a\left( 1-c\right) <1,$ we have%
\begin{equation}
x_{n+1}-x^{\ast }=O(\sqrt{\frac{\ln n}{n^{a\left( 1-c\right) -\rho }}}),\ \
\ \ a.co.  \label{vitesse}
\end{equation}
\end{theorem}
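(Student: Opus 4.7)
The plan is to deduce the rate of convergence directly from the exponential inequality proved in Theorem \ref{Theoreme}, by substituting the candidate rate into the probability bound and choosing the multiplicative constant in a way that makes the resulting series convergent.

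More precisely, I would set $r_n := \sqrt{\ln n / n^{a(1-c) - \rho}}$ and pick an arbitrary $\epsilon_0 > 0$ to be tuned. Then, for every $n$, I would apply inequality (\ref{inegalite}) with $\varepsilon = \epsilon_0 r_n$. The crucial observation is that this choice makes the exponent collapse: the $n^{a(1-c)-\rho}$ factor in the exponent of the bound cancels exactly the $n^{a(1-c)-\rho}$ appearing in the denominator of $r_n^2$, leaving $K_2 \epsilon_0^2 \ln n$. Thus the probability reduces to
\begin{equation*}
\mathbb{P}\left\{\|x_{n+1} - x^*\| > \epsilon_0 r_n\right\} \leq K_1 \exp\bigl(-K_2 \epsilon_0^2 \ln n\bigr) = K_1\, n^{-K_2 \epsilon_0^2}.
\end{equation*}

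The key step is then to choose $\epsilon_0$ large enough so that the series $\sum_n n^{-K_2 \epsilon_0^2}$ converges; it suffices to impose $\epsilon_0 > 1/\sqrt{K_2}$, which is clearly an $O(1)$ quantity. With that choice, $\sum_{n=1}^{+\infty} \mathbb{P}\{\|x_{n+1} - x^*\| > \epsilon_0 r_n\} < +\infty$, which by the very definition of $O(\cdot)$ in the almost complete sense stated just before the theorem, yields exactly $x_{n+1} - x^* = O(r_n)$ $a.co.$

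I do not expect any serious obstacle here: the work was already done in Theorem \ref{Theoreme}, and the present theorem is essentially a matter of calibrating $\varepsilon$ so as to balance the exponential decay against the logarithmic factor needed for summability. The only point requiring mild care is to verify that this particular $\varepsilon$ is still compatible with the range for which the exponential inequality (\ref{inegalite}) was established (in particular with the condition (\ref{t}) on the Markov parameter behind the scenes), which follows because $\epsilon_0 r_n \to 0$ while $n^{a(1-c)}$ grows, so for all sufficiently large $n$ the admissibility condition is automatic; the finitely many initial terms can be absorbed into the constant $K_1$ without affecting summability.
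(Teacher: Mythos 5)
Your proposal is correct and follows essentially the same route as the paper: substitute $\varepsilon=\epsilon_0\sqrt{\ln n/n^{a(1-c)-\rho}}$ into the exponential inequality so the power of $n$ cancels, obtain the bound $K_1 n^{-K_2\epsilon_0^2}$, and pick $\epsilon_0$ with $K_2\epsilon_0^2>1$ (the paper takes $\epsilon_0=\sqrt{(1+d)/K_2}$, $d>0$) to make the series summable. Your extra remark about compatibility with the admissibility condition (\ref{t}) is a point the paper silently omits, but otherwise the arguments coincide.
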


\begin{proof}
Indeed, we have%
\begin{equation}
\mathbb{P}\left\{ \left\Vert x_{n+1}-x^{\ast }\right\Vert >\varepsilon
\right\} \leq K_{1}e^{-K_{2}n^{a\left( 1-c\right) -\rho }\varepsilon ^{2}},
\label{inegeneral}
\end{equation}%
where $K_{1}$ and $K_{2}$ are positive constants.

Consequently,%
\begin{equation}
\mathbb{P}\left\{ \left\Vert x_{n+1}-x^{\ast }\right\Vert >\epsilon _{0}%
\sqrt{\frac{\ln n}{n^{a\left( 1-c\right) -\rho }}}\right\} \leq
K_{1}n^{-k_{2}\epsilon _{0}^{2}}.
\end{equation}%
For $\epsilon _{0}$ well chosen, for example $\epsilon _{0}=\sqrt{\frac{1+d}{%
K_{2}}},d>0$, the right and-side of the inequality (\ref{inegeneral}) is a
general term of convergent series. Hence, the desired result (\ref{vitesse})
is proved.
\end{proof}

\section{Numerical illustrations}

In order to ask the feasibility of the presented algorithm and check the
obtained results of convergence, we consider a numerical example where we
take a known contraction function $F$ and thus possessing a unique fixed
point. By using the Mann's algorithm, we obtain the approximated fixed point
and we compare it with the exact one by giving the absolute and relative
error. Concerning the independent random errors $\left( \xi _{n}\right) _{n}$
introduced in the algorithm, we take them following a reduced centred normal
distribution.

\bigskip

Consider $\mathbb{B}=\mathbb{R}$ and the following function $F$ defined by
\begin{equation*}
\begin{array}{ll}
F: & \mathbb{R}\rightarrow \mathbb{R} \\
& x\mapsto F\left( x\right) =\frac{1}{1+x^{2}}%
\end{array}%
\end{equation*}

It is clear that $F$ is a contraction, moreover, we have
\begin{equation*}
\left\vert F\left( x\right) -F\left( y\right) \right\vert \leq \frac{9}{8%
\sqrt{3}}\left\vert x-y\right\vert <0.65\left\vert x-y\right\vert
\end{equation*}

Consequently, the function $F$ has a unique fixed point given by:%
\begin{equation*}
\sqrt[3]{\sqrt{\frac{31}{108}}+\frac{1}{2}}-\frac{1}{3\sqrt[3]{\sqrt{\frac{31%
}{108}}+\frac{1}{2}}}\simeq 0.682327803828019
\end{equation*}

\begin{enumerate}
\item \textbf{The approximated values of the fixed point}\newline
\newline
\qquad Here, we give the approximated values of the fixed point for
different number of iterations $n$. To compare the fixed point to the
approximated ones, we give the absolute error and relative one. For an
arbitrary choice of $x_{1}$, namely $x_{1}=0.5,\ $the obtained numerical
results are represented in the following table.\newline
\newline
\begin{equation*}
\begin{tabular}{|l|l|l|l|}
\hline
$n$ & $x_{n}$ & Absolute error & Relative error \\ \hline
$10$ & $0.751990575294321$ & $0.069662771466302$ & $0.102095753793818$ \\
\hline
$100$ & $0.689797221706968$ & $0.007469417878949$ & $0.010946963962254$ \\
\hline
$1000$ & $0.683175136988214$ & $8.473331601946965\ e-004$ & $1.241827103397\
e-003$ \\ \hline
$10^{4}$ & $0.682471996264786$ & $1.441924367667768\ e-004$ & $%
2.113242871795981\ e-004$ \\ \hline
$10^{5}$ & $0.682336379893621$ & $8.576065601673122\ e-006$ & $%
1.256883502850006\ e-005$ \\ \hline
$10^{6}$ & $0.682328662137050$ & $8.583090307379138\ e-007$ & $%
1.257913023509519\ e-006$ \\ \hline
$10^{7}$ & $0.682327889660202$ & $8.583218269464510\ e-008$ & $%
1.257931777264628\ e-007$ \\ \hline
\end{tabular}%
\end{equation*}%
\newline
\newline
Note that from $n=1000$, the approximated fixed point is very close to the
real one. These results show the efficiency of the Mann's iterative scheme,
also this method is very easy to implement under the programming package
Matlab. \newline

\item \textbf{level of significance }$\mathbf{\alpha }$
\end{enumerate}

In the following two tables, we take a level of significance $\alpha $ and
for different $\varepsilon ,$ we give the order of number of iterations and
hence after implementing the algorithm we obtain the corresponding
approximated fixed point and consequently a confidence interval.

\begin{description}
\item[(i)] $\alpha =0.01$%
\begin{equation*}
\begin{tabular}{|c|c|c|}
\hline
$\varepsilon $ & $n$ & confidence interval \\ \hline
$0.01$ & $10^{7}$ & $\left[ 0.672327889660202,0.692327803828019\right] $ \\
\hline
$0.05$ & $10^{5}$ & $\left[ 0.632336379893621,0.782336379893621\right] $ \\
\hline
$0.1$ & $10^{4}$ & $\left[ 0.582471996264786,0.782471996264786\right] $ \\
\hline
\end{tabular}%
\end{equation*}

\item[(ii)] $\alpha =0.05$%
\begin{equation*}
\begin{tabular}{|c|c|c|}
\hline
$\varepsilon $ & $n$ & Confidence interval \\ \hline
$0.01$ & $10^{6}$ & $\left[ 0.672328662137050,0.692328662137050\right] $ \\
\hline
$0.05$ & $10^{4}$ & $\left[ 0.632471996264786,0.732471996264786\right] $ \\
\hline
$0.1$ & $10^{3}$ & $\left[ 0.583175136988214,0.783175136988214\right] $ \\
\hline
\end{tabular}%
\end{equation*}
\end{description}


\begin{thebibliography}{99}
\bibitem{Abe} R.T.\ Abebe and H. Zegeye, Mann and Ishikawa-Type Iterative
Schemes for Approximating Fixed Points of Multi-valued Non-Self Mappings,
Mediterranean Journal of Mathematics, $\left( 2016\right) ,1-16$.

\bibitem{Aga} R.P\ Agarwal, N.J\ Huang and Y.J\ Chao, Stability of iterative
process with errors for nonlinear equations of $\phi $-strongly accretive
type operators, Numer. Funct. Anal. Optimiz., $22(5$ \& $6)$, $\left(
2001\right) ,471-485$.

\bibitem{Agr} N.K.\ Agrawa and G. Gupta, Two step random iteration scheme
for random contractive operators in uniformly separable convex Banach space,
International Journal of Recent Advances in Multidisciplinary Research, Vol.
03, Issue 02, $\left( 2016\right) ,$ $1197-1206$.

\bibitem{Ar} A. Arunchai and S. Plubtieng, Random fixed point theorem of
Krasnoselskii type for the sum of two operators, Fixed Point Theory and
Applications $\left( 2013\right) $, 2013:142

\bibitem{Beg} I. Beg, D.\ Dey and M.\ Saha, Convergence and stability of two
random iteration algorithms, J. Nonlinear Funct. Anal. $\left( 2014\right) $%
, 2014:17

\bibitem{Ber} V. Berinde, Iterative Approximation of Fixed Points, Lecture
Notes in Mathematics $1912$, Springer, $2007.$

\bibitem{Ceg} A. Cegielski, Iterative Methods for Fixed Point Problems in
Hilbert Spaces, Springer, $2012.$

\bibitem{Cha} S.S. Chang, Y.J. Cho, B.S. Lee, J.S. Jung and S. M. Kang,
Iterative Approximations of Fixed Points and Solutions for Strongly
Accretive and Strongly Pseudo-Contractive Mappings in Banach Spaces, Journal
of Mathematical Analysis an Applications, $224$, $\left( 1998\right)
,149-165.$

\bibitem{Cha2} S.S. Chang and J.K\ Kim. Convergence Theorems of the Ishikawa
Type Iterative Sequences with Errors for Generalized Quasi-Contractive
Mappings in Convex Metric Spaces, Applied Mathematics Letters, $%
16,(2003),535-542.$

\bibitem{Cho0} Y. J. Cho, J. Li and N. J. Huang, Random Ishikawa iterative
sequence with errors for approximating random fixed points. Taiwanese
Journal of Mathematics, Vol.12, N$%
{{}^\circ}%
$1, $\left( 2008\right) ,51-61$.

\bibitem{Cho} B.S. Choudhury, Random Mann iteration sequence, Applied Math.
Lett., $16$ ($2003$),$93-96$.

\bibitem{Cho1} B. S. Choudhury and M. Ray, Convergence of an iteration
leading to a solution of a random operator equation, J. Appl. Math. Stoc.
Anal., $12$ ($1999$),$161-168$.

\bibitem{Chu0} R. Chugh, R. Rani and S. Narwal, Common Fixed Theorems Using
Random Implicit Iterative Schemes, International Journal of Engineering And
Science, Vol.4, Issue 8 ($2014$),$61-69.$

\bibitem{Chu} R. Chugh, V. Kumar, and S Narwalwe, Some strong convergence
results of random iterative algorithms with errors in Banach spaces, Commun.
Korean Math. Soc. 31 ($2016$), N$%
{{}^\circ}%
$1, $147-161.$

\bibitem{Hua} Z. Huang, Mann and Ishikawa Iterations with Errors for
Asymptotically Nonexpansive Mappings, Computers and Mathematics with
Applications, $37$, $(1999),1-7.$

\bibitem{Hus} N.Hussaina, S. Narwalb, R. Chughc and V. Kumard, On
convergence of random iterative schemes with errors for strongly
pseudo-contractive Lipschitzian maps in real Banach spaces, J. Nonlinear
Sci. Appl. 9 ($2016$),$3157-3168.$

\bibitem{Ish} S. Ishikawa, Fixed points by a new iteration method, Proc.
Amer. Math. Soc., Vol. $44$, N$%
{{}^\circ}%
1,$ $\left( 1974\right) ,147-150.$

\bibitem{Kan} S.M. Kang, F.\ Ali, R.\ Arif, Y. C. Kwun and S. Jabeen, On the
Convergence of Mann and Ishikawa Type Iterations in the Class of Quasi
Contractive Operators, Journal of Computational Analysis \& Applications.
Vol. 21 Issue 1, $\left( 2016\right) ,451-459$.

\bibitem{Kaz} K.R. Kazmi, Mann and Ishikawa Type Perturbed Iterative
Algorithms for Generalized Quasivariational Inclusions, Journal of
Mathematical Analysis an Applications. $209$, $\left( 1997\right) ,$ $%
572-584.$

\bibitem{Kim} G.E.\ Kim and T.H.\ Kim. Mann and Ishikawa Iterations with
Errors for Non-Lipschitzian Mappings in Banach Spaces, Computers and
Mathematics with Applications $42$, $(2001),1565-1570.$

\bibitem{Kra} M.A. Krasnosel'skii, Two remarks on the method of successive
approximations, Uspekhi Mat. Nauk, Volume 10, Issue 1(63), $\left(
1955\right) ,123-127.$

\bibitem{Liu} L.S\ Liu, Ishikawa and Mann iterative process with errors for
nonlinear strongly accretive mappings in Banach spaces, Journal of
Mathematical Analysis an Applications, $194,$ $\left( 1995\right) ,114-125.$

\bibitem{Liu2} L.S.\ Liu, Fixed points of local strictly pseudo-contractive
mappings using Mann and Ishikawa iteration with errors, Indian J. Pure Appl.
Math., $26(7),\left( 1995\right) ,649-659.$

\bibitem{Liu3} Z.\ Liu and S.M\ Kang. Stability of Ishikawa Iteration
Methods with Errors for Strong Pseudocontractions and Nonlinear Equations
Involving Accretive Operators in Arbitrary Real Banach Spaces, Mathematical
and Computer Modelling, $34$, ($2001$), $319-330$.

\bibitem{ZLiu2} Z.\ Liu, J.K. Kim and J.S Ume, Stability of Ishikawa
iteration schemes with errors for nonlinear accretive operators in arbitrary
Banach spaces, Nonlinear Funct. Anal. Appl., 7$\left( 1\right) ,$ $\left(
2002\right) ,55-67$.

\bibitem{Man} W.R. Mann, Mean value methods in iteration, Proc. Amer. Math.
Soc., 4, $(1953),506-510$.

\bibitem{Oke} G.A. Okeke and J.K. Kim, Convergence and summable almost
T-stability of the random Picard-Mann hybrid iterative process, Journal of
Inequalities and Applications ($2015$) 2015:290

\bibitem{Osi} M.O. Osilike, Ishikawa and Mann Iteration Methods with Errors
for Nonlinear Equations of the Accretive Type, Journal of Mathematical
Analysis an Applications, $213$, $\left( 1997\right) ,91-105.$

\bibitem{Osi2} M.O. Osilike, Stability of the Mann and Ishikawa Iteration
Procedures for f-Strong Pseudocontractions and Nonlinear Equations of the $%
\phi $-Strongly Accretive Type, Journal of Mathematical Analysis an
Applications, $227$, $\left( 1998\right) ,319-334.$

\bibitem{Pan} B \ Panyanak, Mann and Ishikawa iterative processes for
multivalued mappings in Banach spaces, Computers and Mathematics with
Applications, $54$, $(2007),872-877.$

\bibitem{Pic} E. Picard, M\'{e}moire sur la th\'{e}orie des \'{e}quations
aux d\'{e}riv\'{e}es partielles et la m\'{e}thode des approximations
successives, Journal de Math\'{e}matiques pures et appliqu\'{e}es, 6, $%
\left( 1890\right) ,145-210$.

\bibitem{Sal} S. Saluja, D. Magarde and A.K.\ Dhakde, A Common Fixed Point
Theorem for Two Random Operators using Random Mann Iteration Scheme,
Mathematical Theory and Modeling, Vol.3, No.6, $\left( 2013\right) ,263-266.$

\bibitem{Sha} N. Shahzada and H. Zegeyeb, On Mann and Ishikawa iteration
schemes for multi-valued maps in Banach spaces. Nonlinear Analysis: Theory,
Methods \& Applications, Volume 71, Issues 3--4, $\left( 2009\right)
,838-844 $.

\bibitem{HXu} H-K Xu, A Note on the lshikawa iteration Scheme. Journal of
Mathematical Analysis an Applications, 167, ($1992$),$582-587$.

\bibitem{YXu} Y.\ Xu, Ishikawa and Mann Iterative Processes with Errors for
Nonlinear Strongly Accretive Operator Equations. Journal of Mathematical
Analysis an Applications. $224$, $\left( 1998\right) ,91-101$.

\bibitem{YXu2} Y. Xu, Ishikawa and Mann iterative methods with errors for
nonlinear accretive operator equations, J.\ Math. Anal. Appl. 224, $\left(
1998\right) ,91-101$.
\end{thebibliography}
\end{document}